\journal{Journal of \LaTeX\ Templates}
\newtheorem{proposition}{Proposition}
\newtheorem{definition}{\bf Definition}
\newtheorem{remark}{Remark}
\newtheorem{property}{Property}
\begin{document}
	
	\begin{frontmatter}
		
		\title{Spectral graph fractional Fourier transform for directed graphs and its application}
		
		\author{Fang-Jia Yan, Bing-Zhao Li}
		\address{School of Mathematics and Statistics, Beijing Institute of Technology, Beijing, China, 100081}
		
		
		
		
		\begin{abstract}
In graph signal processing, many studies assume that the underlying network is undirected. Although the digraph model is rarely adopted, it is more appropriate for many applications, especially for real world networks. In this paper, we present a general framework for extending the graph signal processing to directed graphs in graph fractional domain. For this purpose, we consider a new definition for fractional Hermitian Laplacian matrix on directed graph and generalize the spectral graph fractional Fourier transform to directed graph (DGFRFT). Based on our new transform, we then define filtering, which is used in reducing unnecessary noise superimposed on temperature data. Finally, the performance of the proposed DGFRFT approach is also evaluated through numerical experiments using real-world directed graphs.
		\end{abstract}
		
		\begin{keyword}
		Graph signal processing, graph Fourier transform, fractional Fourier transform, graph Laplacian, directed graph.
		\end{keyword}
		
	\end{frontmatter}
	
	
\section{Introduction}
\label{Intro}
In the field of network science and big data, it is necessary to expand the scope beyond classical time signal analysis and processing to accommodate the signals defined on the graph \cite{ramakrishna2021grid, chen2021graph, zhou2022novel, 4, 13}. The irregular structure of the underlying graph is different from the regular structure of classical signal processing, which brings great challenges to the analysis and processing of graph signal. Graph signal processing (GSP) offers effective tools to process such network data \cite{liao2022joint, gama2020graphs,morency2021graphon}. For instance, graph-supported signals can model vehicle trajectories over road networks \cite{deri2016new}. Research in GSP has only recently begun, but it is growing rapidly. The main contributions include wavelet and Fourier transforms \cite{2011Wavelets, Moura2013IEEE, 13, 25, 27}, sampling and reconstruction of graph signals \cite{kim2022quantization, yang2021efficient, chen2015discrete, romero2016kernel, yang2022reconstruction}, uncertainty principles \cite{tsitsvero2016signals, pasdeloup2019uncertainty}, filtering of graph signals \cite{xiao2021distributed, ozturk2021optimal}, etc. 

Different transforms of graph signal are still the core of GSP. Among them, the graph Fourier transform (GFT) acts as a cornerstone \cite{4,13}. In the literature, there are two frameworks for frequency analysis and processing of graph signals: (i) based on Laplacian matrix \cite{4}, (ii) based on adjacency matrix. The classical Laplacian based approach is limited to graph signals located on undirected graphs \cite{13}. In this method, frequency ordering is based on quadratic form and small eigenvalues correspond to low frequencies and vice versa. On the other hand, the adjacency based approach builds on the shift operator of the graph. This approach constructs the Fourier basis by using generalized eigenvectors of the adjacency matrix. Recently, some unique methods have been proposed to extend the GFT to directed graphs \cite{singh2016graph,sardellitti2017graph, sevi2018harmonic, shafipour2018directed, shafipour2019windowed, shafipour2018digraph, barrufet2021orthogonal,furutani2019graph, marques2020signal, seifert2021digraph, chen2022graph}. \cite{singh2016graph} uses the in-degree matrix and weight matrix to define the directed Laplacian. \cite{sardellitti2017graph} proposes an alternative approach that builds the graph Fourier basis as the set of orthonormal vectors that minimize a continuous extension of the graph cut size, known as the Lov{\'a}sz extension. \cite{sevi2018harmonic} introduces a novel harmonic analysis for functions defined on the vertices of a strongly connected directed graph. \cite{shafipour2018directed} studies the problem of constructing a GFT for directed graphs, which decomposes graph signals into different modes of variation with respect to the underlying network. \cite{shafipour2019windowed} proposes a methodology to carry out vertex-frequency analyses of graph signals on directed graph based on \cite{shafipour2018directed}. \cite{barrufet2021orthogonal} also defines new transform for directed graph, which using the Schur decomposition and leads to a series of embedded invariant subspaces for which orthogonal basis are available. \cite{furutani2019graph} extends graph signal processing to directed graphs based on the Hermitian Laplacian. \cite{marques2020signal} provides an overview of the current landscape of signal processing on directed graphs. Although these methods can construct Fourier bases with desirable properties, Fourier bases cannot completely retain information about the structure of the underlying graph.

In order to extract the local information of the graph signal, a new research direction has been proposed in graph signal processing (GSP), that is, fractional order \cite{25,26,27,28,ozturk2021optimal,yan2021windowed, ge2022optimal, kartal2022joint, yan2022multi}. The fractional order has gained considerable attention in the last 20 years in classical signal processing, and the application of fractional order to graphs has also aroused the interest of researchers \cite{ozturk2021optimal,yan2021windowed, ge2022optimal, kartal2022joint}. The graph fractional domain is a combination of the graph spectrum domain and fractional transform domain. The graph fractional Fourier transform (GFRFT) related to graph adjacency matrix is proposed in \cite{25}. Furthermore, a new spectral graph Fractional Fourier Transform (SGFRFT) related to graph Laplacian matrix is proposed in \cite{27}. GFRFT and SGFRFT show advantages in revealing the local characteristics of the graph signal. However, for directed graph, both of these transforms have their drawbacks. The Laplacian matrix for SGFRFT is constructed by an undirected graph, so SGFRFT does not apply to directed graphs. Although the GFRFT can use for directed graph, it has some potential problems. First, the basis comes from the Jordan decomposition is not orthonormal, and the Parseval’s identity does not hold and inner products are not preserved in the vertex domain and graph fractional domains. Second, numerical calculations of Jordan decomposition also often produce numerical instability even for medium-sized matrices \cite{golub1976ill}. 

The presence of directionality plays a crucial role when it comes to modeling social networks, technological networks,
biological and biomedical networks, as well as information networks \cite{newman2003structure, han2012extended, chui2018representation}.
For directed graphs, the existing studies in graph fractional domain are all based on adjacency matrix \cite{25,26}. In the continuous setting, fractional Fourier transform seeks the orthogonal bases which are eigenfunctions of the fractional Laplacian operator. The background naturally leads us to consider the eigenvectors of the fractional graph Laplacian operator in the discrete setting. Thus, we believe the Laplacian-based construction is more natural. In this paper, we aim to generalize SGFRFT to directed graphs. In the new transform, the fractional Laplacian operator is a simple extension of the Hermitian Laplacian discussed in \cite{zhang2021magnet} to directed graphs. Moreover, the new definition of SGFRFT links the existing Laplacian based approach to directed graph. The paper is organized as follows. We first review the GFT and SGFRFT as our foundation in Section \ref{Preliminary}. Section \ref{main} introduces the procedure about how to design spectral graph fractional Fourier transform for signal on directed graph. Then, an ideal filter and frequency selective filter are presented in Section \ref{filter}. For the last part, we present experiments on real directed graph, and an application of signal denoising using the filtering in the previous section.

\section{Preliminaries}
\label{Preliminary}

\subsection{Spectral graph theory}
An undirected weighted graph $\mathcal{G}=\{\mathcal{V}, \mathcal{E}, \mathcal{W}\}$ consists of a finite set of vertices $\mathcal{V}=\{v_0, \cdots, v_{N-1}\}$, where $N=|\mathcal{V}|$ is the number of vertices, a set of edges $\mathcal{E}=\{(i,j)|i,j\in\mathcal{V},j\sim i\}\subseteq\mathcal{V}\times\mathcal{V}$, and a weighted adjacency matrix $\mathcal{W}$. If the values of $\mathcal{W}$ are all in ${0, 1}$ then $\mathcal{W}$ is called an adjacency matrix \cite{chui2018representation}.  $\mathcal{W}=[{\mathcal{W}_{ij}}]\in\mathbb{R}^{N\times N}$ is defined as $\mathcal{W}_{ij}=w_{ij}$ if $(i,j)\in\mathcal{E}$ and $\mathcal{W}_{ij}=0$ otherwise. The non-normalized graph Laplacian is a symmetric difference operator $\mathcal{L}=D-\mathcal{W}$ \cite{13}, where $D:=diag(d_1, ..., d_N)$ is a diagonal degree matrix of $\mathcal{G}$, and $d_i:=\sum^{N}_{j=1}w_{ij}$. Let $\{\chi_0, \chi_1, \cdots, \chi_{N-1}\}$ be the set of orthonormal eigenvectors. Suppose that the corresponding Laplacian eigenvalues are sorted as $0=\lambda_0<\lambda_1\leq\lambda_2\leq\cdots\leq\lambda_{N-1}:=\lambda_{max}$.
Therefore
\begin{equation}
	\mathcal{L}=\mathbf{\chi}\mathbf{\Lambda} \mathbf{\chi}^H,
\end{equation}
where
\begin{equation}\label{tezheng}
	\mathbf{\chi} = 
		[\chi_0, \chi_1, \cdots, \chi_{N-1}],
\end{equation}
and the diagonal matrix is $\mathbf{\Lambda}=diag([\lambda_0, \lambda_1, \cdots, \lambda_{N-1}])$. The superscript $H$ represents the conjugate transpose of matrix.

The graph signal $f$ is defined as binding a scalar value to each vertex through the function $f:\mathcal{V}\rightarrow\mathbb{R}$.
Using the definition of (inverse) graph Fourier transform (GFT) as in \cite{13}, the GFT of $f$ is
\begin{equation}
	\widehat{f}(\ell)=\langle f,\chi_{\ell}\rangle=\sum^{N}_{n=1}f(n)\chi^{*}_{\ell}(n),  \ell=0,1,\cdots,N-1,
\end{equation}
where $*$ is complex conjugate.
The inverse GFT is given by
\begin{equation}\label{IGFT}
	\begin{split}
		f(n)=\sum^{N-1}_{\ell=0}\widehat{f}(\ell)\chi_{\ell}(n), n=0,1,\cdots,N-1.
	\end{split}
\end{equation}

\subsection{Spectral graph Fractional Fourier Transform}
The graph fractional Laplacian operator $\mathcal{L}_{\alpha}$ is defined by $\mathcal{L}_{\alpha}=\mathbf{\kappa}R\mathbf{\kappa}^{H}$, where $\alpha$ is the fractional order, $0<\alpha\leq1$,
$\ell=0,1,\cdots,N-1$ \cite{27}.
Note that
\begin{equation}\label{fractionallapalcian}
	\mathbf{\kappa} = \begin{bmatrix}
		\kappa_0,& \kappa_1,& \cdots,& \kappa_{N-1}
	\end{bmatrix}={\chi}^\alpha,
\end{equation}
and
\begin{equation}
	R=\text{diag}({\begin{bmatrix}
			r_0,& r_1,& \cdots,& r_{N-1}
	\end{bmatrix}})
	=\mathbf{\Lambda}^\alpha,
\end{equation}
so that
\begin{equation}
	r_\ell=\lambda_\ell^\alpha.
\end{equation}
In the follow-up part of this paper, computing the $\alpha$ power of a matrix always uses matrix power function.

The spectral graph Fractional Fourier Transform (SGFRFT) of any signal $f$ building on the graph $\mathcal{G}$ is defined by \cite{27}:
\begin{equation}
	\begin{split}
		\widehat{f}_{\alpha}(\ell)=\langle f,\kappa_{\ell}\rangle=\sum^{N}_{n=1}f(n)\kappa^{*}_{\ell}(n),  \ell=0,1,\cdots,N-1,
	\end{split}
\end{equation}
when $\alpha=1$, the SGFRFT degenerates into standard GFT.

The inverse SGFRFT is given by
\begin{equation}\label{IGFRFT}
	\begin{split}
		f(n)=\sum^{N-1}_{\ell=0}\widehat{f}_{\alpha}(\ell)\kappa_{\ell}(n), n=0,1,\cdots,N-1.
	\end{split}
\end{equation}

\begin{remark}
	In this paper, to distinguish directed graphs from undirected graphs,  we use $\mathcal{G}$ represents undirected graphs whereas $G$ represents directed graphs.
\end{remark}


%
%
%

\section{Spectral graph fractional Fourier transform for directed graph (DGFRFT)}
\label{main}

The orthogonality of eigenvectors gives the algebraic ideal properties of the SGFRFT, which makes the graph signal processing of undirected graphs develop well. However, since the eigenvectors of the fractional Laplacian operator of digraphs are usually not orthonormal, it is difficult to extend graph signal processing to digraphs simply. Our goal is to find a new way of defining fractional Laplacian matrix to keep the orthogonality of the eigenvectors and avoid the calculation of Jordan decomposition, i.e., Hermitian fractional Laplacian matrix. The Hermitian Laplacian is a complex matrix obtained
from an extension of the graph Laplacian \cite{zhang2021magnet, furutani2019graph, fanuel2018magnetic, f2020characterization}. It preserves the edge directionality and Hermitian property. Here we consider a directed graph $G=(V,E,W)$. $V$ is the set of $N$ vertices and $E$ is the set of directed edges. $W$ is the weight matrix of the graph, its element is defined as $w_{ij}$ which represents the weight of the directed edge from vertex $i$ to $j$. For directed graphs, the integers $d_i^{in}$ and $d_i^{out}$ specify the number of arrowheads directed toward and away from vertex $i$ respectively, \cite{hakimi1965degrees}. The in-degree of vertex $i$ is calculated as $d_i^{in}=\sum^N_{j=1}w_{ij}$, whereas, the out-degree is $d_i^{out}=\sum^N_{j=1}w_{ji}$. We define a new weight matrix as $W_s=[w_{ij,s}]$, where $w_{ij,s}=\frac{1}{2}(w_{ij}+w_{ji})$. $W_s=[w_{ij,s}]$ and $E_s$ (ignoring the directionality of $E$) uniquely
determine the corresponding undirected graph $G_s = (V, E_s, W_s)$ for directed $G$. The diagonal degree matrix $D_s$ of $G_s$ is $D(i,i):=\sum^{N}_{j=1}w_{ij,s}$. Then a Hermitian Laplacian of $G$ is defined as:
\begin{definition}(Hermitian graph Laplacian matrix)
\begin{equation}
	L=D_s-\Gamma_q\odot W_s,
\end{equation}
where $\odot$ is the hadamard product \cite{horn1990hadamard}, and $\Gamma_q$ is a Hermitian matrix which encodes the edge directionality of $G$.
\end{definition}
$\Gamma_q$ can take many forms, here's a simple example. Define $\Gamma_q=[\gamma(i,j)]$ that satisfies $\gamma(i,j)=\gamma(i,j)^H$.
$\gamma$ is a map from $V\times V$ to a unitary group of degree 1 and is written by \cite{f2020characterization}:
\begin{equation}
	\gamma_q{i,j}= e^{2\pi iq(w_{ij}-w_{ji})},
\end{equation}
when $w_{ij}=1$, $\gamma_q{i,j}= e^{2\pi iq}$ represents the edge from vertex $i$ to $j$ and $0\leq q<1$ is a rotation parameter. 
%
%

As $D_s$ and $W_s$ are real symmetric matrices, $L$ is Hermitian matrix. Let $v_k$ and $u_k$ be respectively the k-th eigenvalue and eigenvector of
the Hermitian Laplacian $L$. The eigendecomposition of $L$ can be written as:
\begin{equation}
	L=UVU^H,
\end{equation}
where $U=[u_0, u_1, \cdots, u_{N-1}]$. $V=diag([v_0, v_1, \cdots, v_{N-1}])$ is a real diagonal matrix, and Hermitian Laplacian eigenvalues are sorted as $0\leq v_0<v_1\leq v_2\leq\cdots\leq v_{N-1}:=v_{max}$.

The graph Hermitian fractional Laplacian matrix for directed graph is given by $L_{\alpha,d}=PQP^H$, in which
\begin{equation}\label{fractionallapalcian}
	P = \begin{bmatrix}
		p_0,& p_1,& \cdots,& p_{N-1}
	\end{bmatrix}={U}^\alpha,
\end{equation}
and
\begin{equation}
	Q=\text{diag}({\begin{bmatrix}
			\xi_0,& \xi_1,& \cdots,& \xi_{N-1}
	\end{bmatrix}})
	=V^\alpha,
\end{equation}
that is
\begin{equation}
	\xi_\ell=v_\ell^\alpha, \quad l=0,1,\cdots,N-1.
\end{equation}

We select two different $\alpha$ as examples to show the spectral properties of Hermitian fractional Laplacian matrix $L_{\alpha,d}$ in Fig. \ref{eigenvaluefigure}.  When the rotation parameter $q=0$, the Hermitian fractional Laplacian matrix degrades to normal fractional Laplacian matrix as each element of $\Gamma_q$ equals to $1$. Therefore, when the values of $q$ are relatively small, the spectrum of the Hermitian fractional Laplacian is similar to the spectrum of the fractional Laplacian, otherwise, when $q$ is large, there will be oscillation. 

\begin{figure}
	\centering
	\subfigure[Eigenvalues of Hermitian fractional Laplacian when $\alpha=0.8$]{
		\label{alpha=0.8}
		\includegraphics[width=0.45\textwidth]{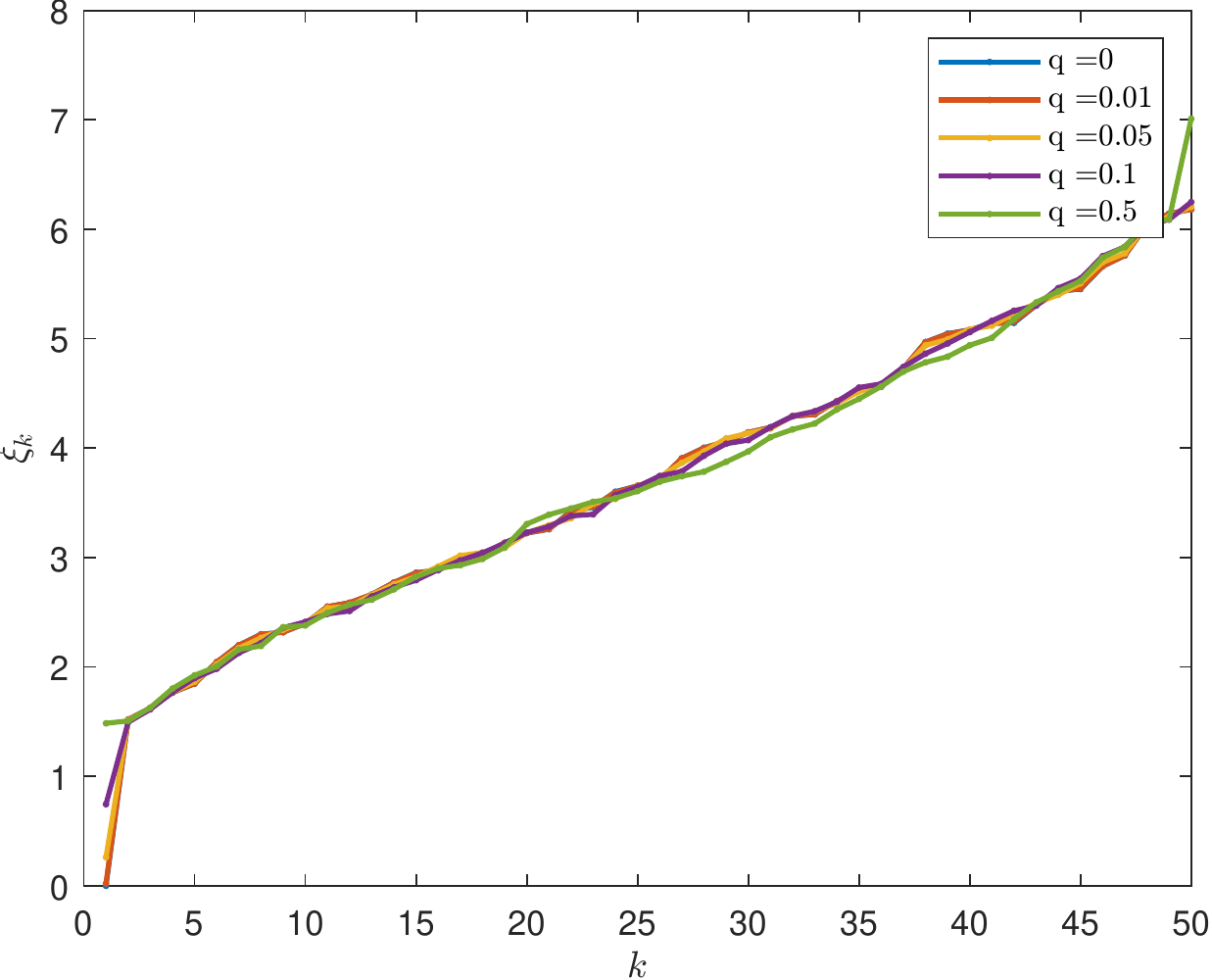}}
	\subfigure[Eigenvalues of Hermitian fractional Laplacian when $\alpha=0.6$]{
		\label{alpha=0.6}
		\includegraphics[width=0.45\textwidth]{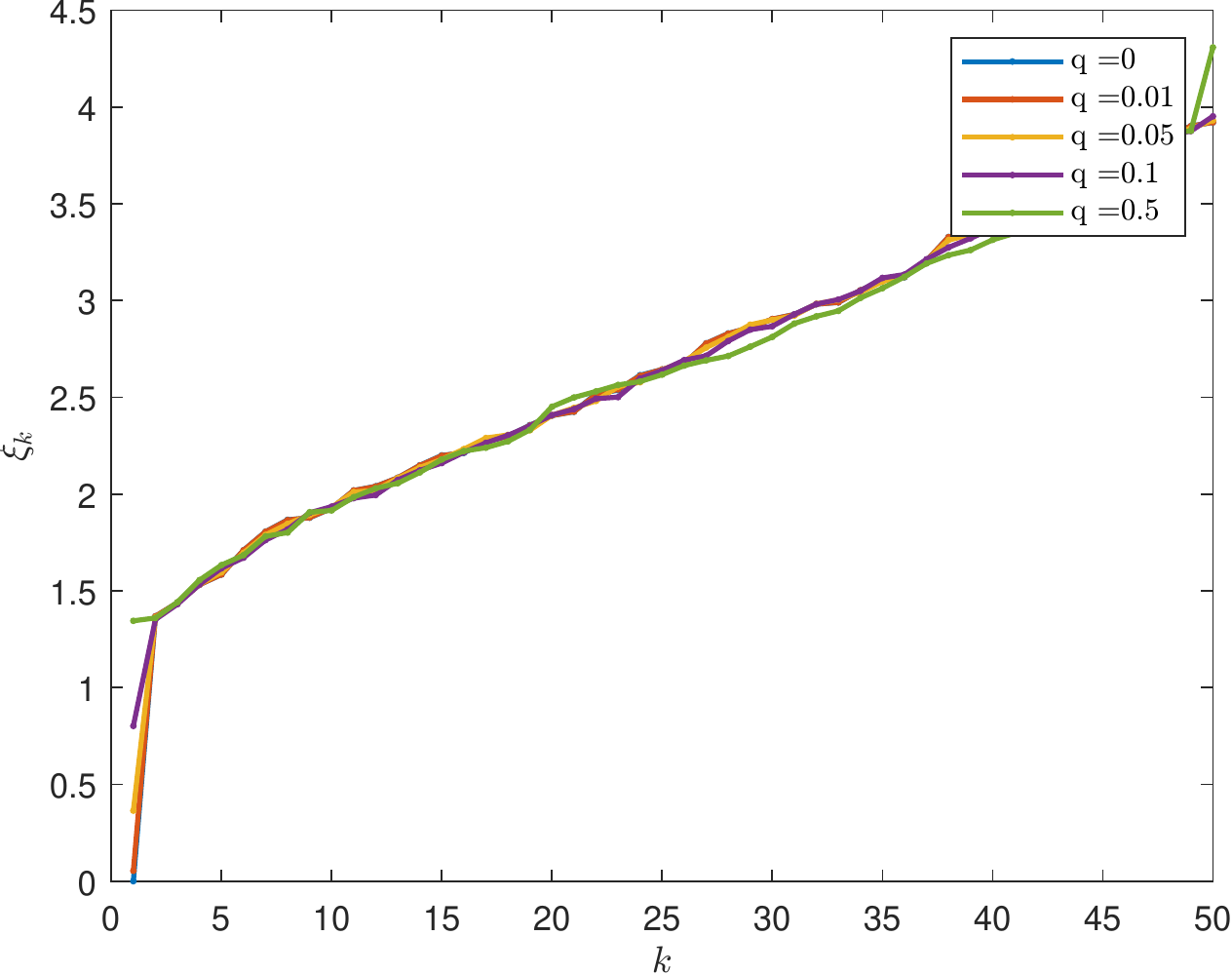}}
	\caption{Eigenvalues of Hermitian fractional Laplacian with respect to different fractional orders.}
	\label{eigenvaluefigure}
\end{figure}

The total variation for graph signals is defined as an absolute sum of the
discrete difference of a graph signal \cite{ono2015total}. It was first introduced in as an extension of the original total variation (TV) \cite{chan2001digital}. We define the total variation of a graph signal $f$ as
\begin{equation}
\operatorname{TV}(\boldsymbol{f}):=\sum_{(i, j) \in \mathcal{E}}|f(i)-f(j)|^2.
\end{equation}
to measure the smoothness of eigenvectors of $L_{\alpha,d}$. The total variation has an intuitive interpretation: it compares how the signal varies with time or space and calculates a cumulative magnitude of the signal. The smaller the difference between the original signal $f(i)$ and $f(j)$, the lower the signal's variation. Fig. \ref{TV} shows total variations of eigenvectors of $L_{\alpha,d}$
on a random directed graph with 50 nodes when the fractional parameter $\alpha=0.8$ and $0.6$. To construct a random directed graph, we fix $n$ nodes and for each pair of nodes we generate a direct edge with probability $p$ ($n=50, p=0.1$).

\begin{figure}[t]
	\centering
	\subfigure[Total variations of eigenvectors of Hermitian fractional Laplacian when $\alpha=0.8$]{
		\label{TValpha=0.8}
		\includegraphics[width=0.45\textwidth]{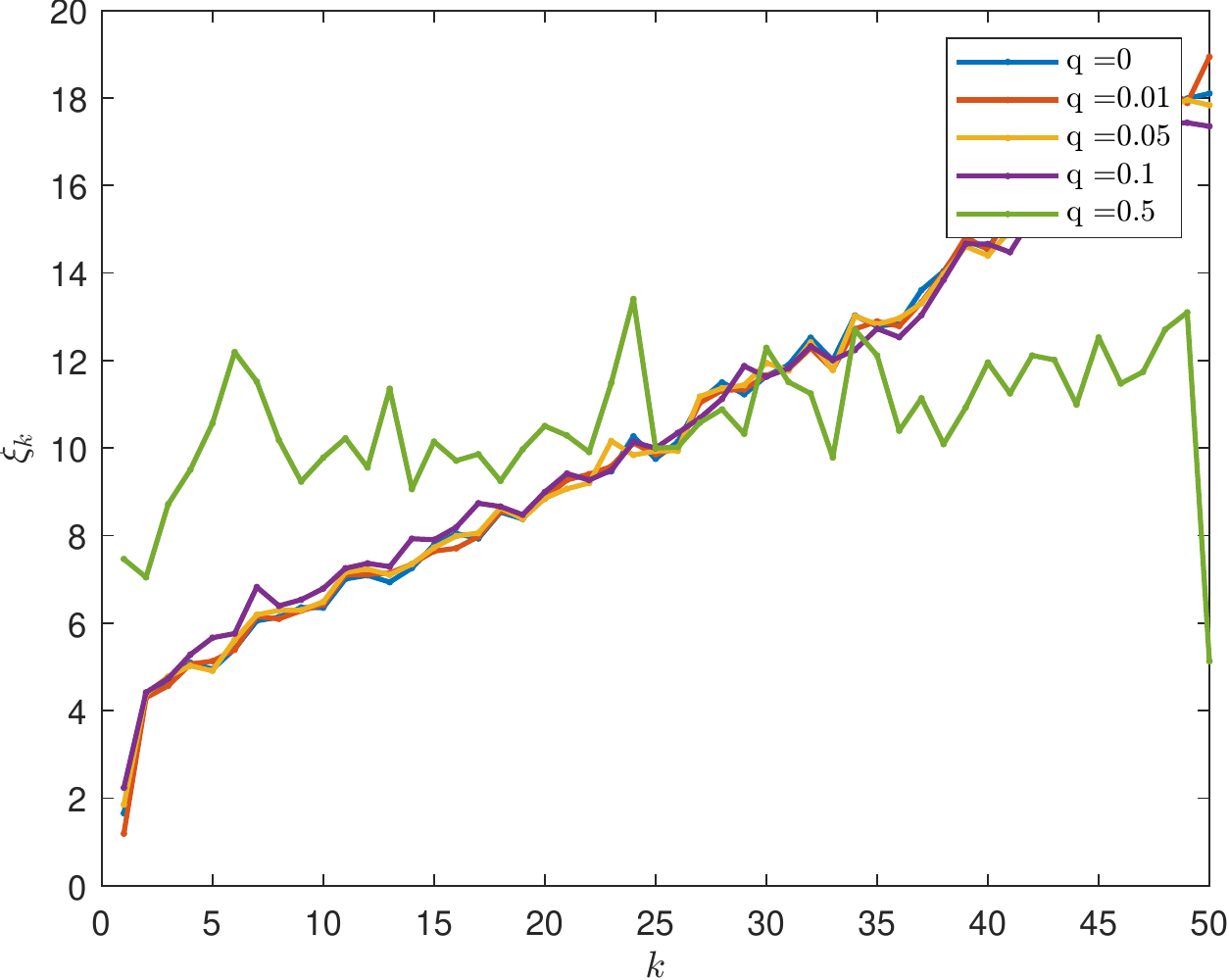}}
	\subfigure[Total variations of eigenvectors of Hermitian fractional Laplacian when $\alpha=0.6$]{
		\label{TValpha=0.6}
		\includegraphics[width=0.45\textwidth]{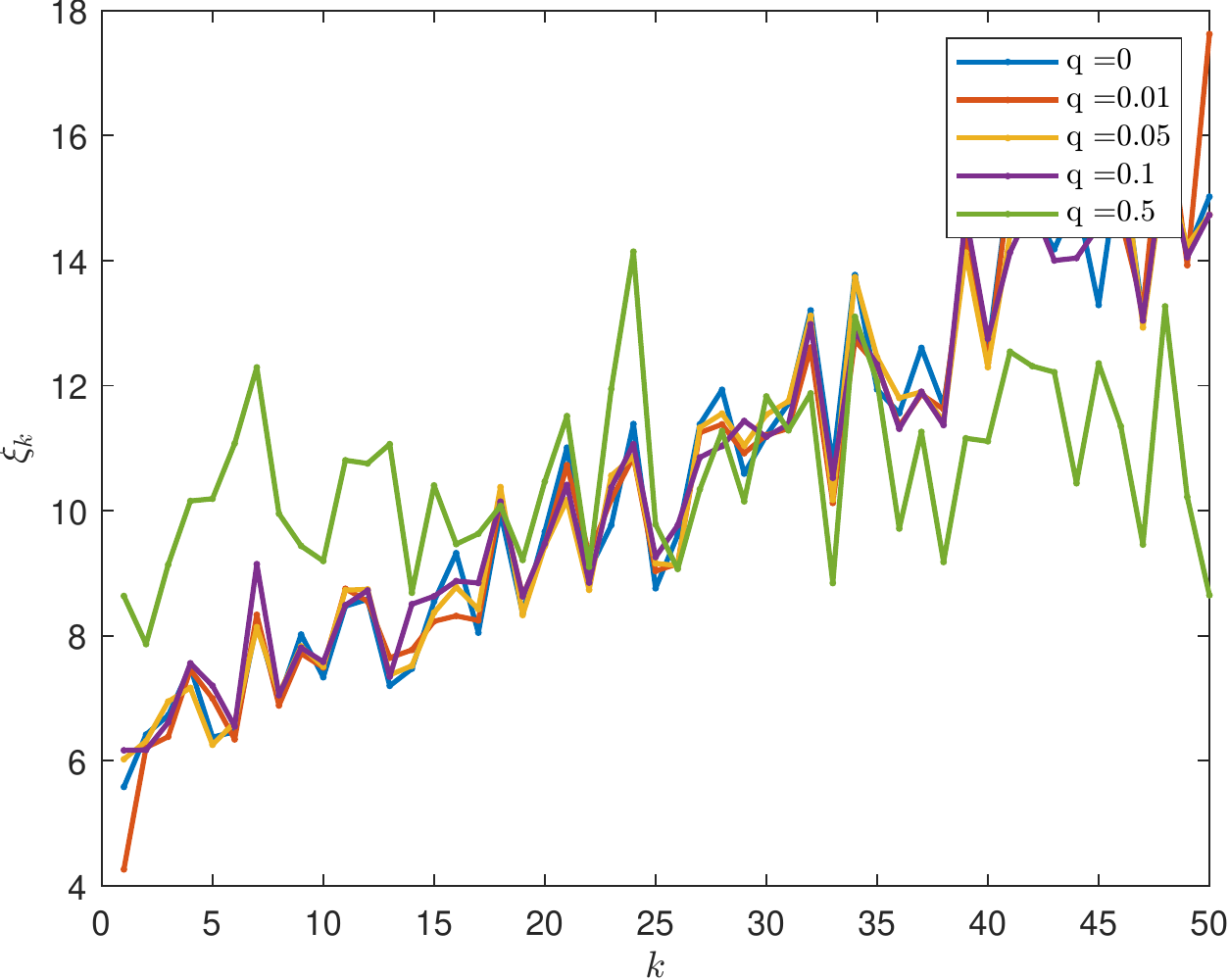}}
	\caption{Total variations of eigenvectors of Hermitian fractional Laplacian with respect to different fractional orders.}
	\label{TV}
\end{figure}

Then we prove that $L_{\alpha,d}$ is a positive semi-definite Hermitian matrix.
\begin{proposition}\label{Hermitian}
	For any fractional order $\alpha$, the $L_{\alpha,d}$ is a Hermitian matrix:
	\begin{equation}
		(L_{\alpha,d})^{H}=L_{\alpha,d}.
	\end{equation}
\end{proposition}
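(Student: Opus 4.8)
The plan is to work directly from the factored definition $L_{\alpha,d} = PQP^H$ and apply the standard reversal rule for the conjugate transpose of a product, namely $(ABC)^H = C^H B^H A^H$. First I would compute
\begin{equation}
(L_{\alpha,d})^H = (PQP^H)^H = (P^H)^H Q^H P^H = P\,Q^H\,P^H,
\end{equation}
so that the entire claim collapses to showing that the middle factor is self-adjoint, i.e. $Q^H = Q$. It is worth noting that no property of $P = U^\alpha$ (such as unitarity) is needed at this stage; only the Hermitian character of $Q$ enters.

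Next I would establish $Q^H = Q$. By construction $Q = V^\alpha = \text{diag}(\xi_0, \cdots, \xi_{N-1})$ with $\xi_\ell = v_\ell^\alpha$, where the $v_\ell$ are the eigenvalues of the Hermitian Laplacian $L$. Since $L$ is Hermitian, its spectrum is real, and by the ordering stated earlier the eigenvalues satisfy $0 \leq v_0 < v_1 \leq \cdots \leq v_{N-1}$, so each $v_\ell$ is a nonnegative real number. Raising a nonnegative real number to the real power $\alpha$ (via the matrix power function applied to the diagonal matrix $V$) again yields a nonnegative real number, so every $\xi_\ell$ is real. A diagonal matrix with real entries equals its own conjugate transpose, hence $Q^H = Q$.

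Combining the two steps gives $(L_{\alpha,d})^H = P\,Q^H\,P^H = P\,Q\,P^H = L_{\alpha,d}$, which is the desired identity. The only point that requires genuine care — and which I expect to be the main, though mild, obstacle — is the justification that each $\xi_\ell = v_\ell^\alpha$ is truly real rather than complex. This is precisely where the nonnegativity of the spectrum of $L$ is used: were some eigenvalue negative, its fractional power under the matrix power function could pick up a complex branch value, and $Q$ would cease to be Hermitian. Thus the argument relies on the sorted, nonnegative eigenvalue condition already recorded for $L$, which is consistent with the positive semi-definiteness asserted alongside the Hermitian property.
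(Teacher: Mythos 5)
Your argument is correct and follows essentially the same route as the paper: expand $(PQP^H)^H$ by the reversal rule and reduce everything to the fact that $Q=V^\alpha$ is a real diagonal matrix, hence self-adjoint. Your extra remark that the nonnegativity of the eigenvalues of $L$ is what guarantees $v_\ell^\alpha$ stays real is a slightly more careful justification of the step the paper states in one line, but it is not a different proof.
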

\begin{proof}
	Since $V$ is a real diagonal matrix, $(V^\alpha)^H=V^\alpha$.
	\begin{equation}
	   (L_{\alpha,d})^{H}=(PQP^H)^H=(U^\alpha V^\alpha(U^\alpha)^H)^H=U^\alpha V^\alpha(U^\alpha)^H=PQP^H=L_{\alpha,d}.
	\end{equation}
\end{proof}

\begin{proposition}
	For any fractional order $\alpha$, the $L_{\alpha,d}$ is a positive semi-definite matrix.
\end{proposition}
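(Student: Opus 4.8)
The plan is to verify the defining inequality $x^H L_{\alpha,d}\,x \ge 0$ for every $x \in \mathbb{C}^N$ directly from the factorization $L_{\alpha,d}=PQP^H$, rather than computing eigenvalues from scratch. Since Proposition~\ref{Hermitian} already guarantees that $L_{\alpha,d}$ is Hermitian, the quadratic form $x^H L_{\alpha,d}\,x$ is automatically real, so it suffices to show it is non-negative for all $x$.

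First I would introduce the change of variable $y = P^H x$. Because $L_{\alpha,d}=PQP^H$ with $Q=\mathrm{diag}(\xi_0,\dots,\xi_{N-1})$ diagonal, this gives
\begin{equation*}
	x^H L_{\alpha,d}\,x = x^H P\,Q\,P^H x = (P^H x)^H Q (P^H x) = y^H Q y = \sum_{\ell=0}^{N-1}\xi_\ell\,|y_\ell|^2 .
\end{equation*}
The problem thus reduces entirely to showing that each diagonal entry $\xi_\ell$ is non-negative; once that is established, every summand $\xi_\ell|y_\ell|^2$ is non-negative and the conclusion is immediate.

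The core step is therefore to argue $\xi_\ell = v_\ell^\alpha \ge 0$. This rests on two facts. The first is that the Hermitian Laplacian $L$ is itself positive semi-definite, so its eigenvalues obey $v_\ell \ge 0$, consistent with the ordering $0\le v_0 < v_1 \le \cdots \le v_{N-1}$ fixed above. The second is that, since $V$ is a real diagonal matrix with non-negative entries and $0<\alpha\le 1$ is real, $V^\alpha=\mathrm{diag}(v_\ell^\alpha)$ is again a real diagonal matrix with non-negative entries, so each $\xi_\ell=v_\ell^\alpha$ is a genuine real non-negative number. The point I would treat most carefully—the main (and really only) obstacle—is confirming $v_\ell \ge 0$. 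I would obtain it from the quadratic-form identity for the Hermitian Laplacian: expanding $L = D_s - \Gamma_q \odot W_s$ and using $|\gamma_q(i,j)|=1$, the Hermitian symmetry $\gamma_q(i,j)=\overline{\gamma_q(j,i)}$, and $w_{ij,s}=w_{ji,s}$ gives
\begin{equation*}
	x^H L x = \frac{1}{2}\sum_{i,j} w_{ij,s}\,\bigl|x_i - \gamma_q(i,j)\,x_j\bigr|^2 \ge 0 ,
\end{equation*}
which shows $L\succeq 0$ and hence $v_\ell \ge 0$ for all $\ell$.

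Combining these observations, every term in $\sum_\ell \xi_\ell|y_\ell|^2$ is non-negative, so $x^H L_{\alpha,d}\,x \ge 0$ for all $x\in\mathbb{C}^N$, which is exactly positive semi-definiteness. As an alternative route to the same conclusion, one may note that $P=U^\alpha$ is unitary—the $\alpha$-power of a unitary matrix keeps all of its eigenvalues on the unit circle—so that $L_{\alpha,d}=PQP^H$ is a bona fide eigendecomposition whose eigenvalues are precisely the $\xi_\ell = v_\ell^\alpha \ge 0$; this makes the positive semi-definiteness transparent without any further estimate.
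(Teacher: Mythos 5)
Your proof is correct, and its core computation coincides with the paper's: the paper factors $L_{\alpha,d}=J^HJ$ with $J=V^{\alpha/2}(U^\alpha)^H$ and observes $f^HL_{\alpha,d}f=\|Jf\|^2\ge 0$, which is exactly your substitution $y=P^Hx$ followed by $x^HL_{\alpha,d}x=\sum_\ell \xi_\ell|y_\ell|^2$ written in matrix form. Where you genuinely go beyond the paper is in justifying the one fact both arguments secretly depend on, namely $v_\ell\ge 0$: the paper's $V^{\alpha/2}$ is only a real matrix (and its $J^HJ$ factorization only valid) because the eigenvalues of the Hermitian Laplacian $L$ are non-negative, yet the paper never proves this --- it is merely asserted through the ordering $0\le v_0<v_1\le\cdots\le v_{N-1}$. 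Your derivation of the quadratic-form identity
\begin{equation*}
x^HLx=\tfrac{1}{2}\sum_{i,j}w_{ij,s}\,\bigl|x_i-\gamma_q(i,j)x_j\bigr|^2\ge 0,
\end{equation*}
using $|\gamma_q(i,j)|=1$, the Hermitian symmetry of $\Gamma_q$, and $w_{ij,s}=w_{ji,s}\ge 0$, closes that gap cleanly and makes the argument self-contained. Your closing remark that $P=U^\alpha$ is unitary (so $PQP^H$ is an eigendecomposition with eigenvalues $v_\ell^\alpha\ge 0$) is also consistent with the paper's Property 1 and gives a second, equally valid route. In short: same skeleton as the paper, but with the load-bearing step ($L\succeq 0$) actually proved rather than assumed.
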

\begin{proof}
	From Proposition \ref{Hermitian}, we know $V^\alpha$ is real diagonal matrix. Let $J=V^{\alpha/2}(U^\alpha)^H$, so $L_{\alpha,d}=J^HJ$. For any signal $f\in\mathbb{C}^N$, note that
	\begin{equation}
		f^HL_{\alpha,d}f=f^HU^\alpha V^\alpha(U^\alpha)^Hf=f^HJ^HJf=(Jf)^HJf\geq0
	\end{equation}
\end{proof}

Clearly, in this case, this Hermitian fractional Laplacian matrix has a set of orthonormal eigenvectors. This orthogonality allows the basic concepts of graph signal processing of undirected graphs to be extended directly to those of directed graphs.

\begin{definition}(Directed graph fractional Fourier transform)
The spectral graph Fractional Fourier Transform for directed graph (DGFRFT) of any signal $f$ building on the graph $G$ is defined as:
\begin{equation}
	\begin{split}
		\widehat{f}_{\alpha,d}(\ell)=\langle f,p_{\ell}\rangle=\sum^{N}_{n=1}f(n)p^{*}_{\ell}(n), \ell=0,1,\cdots,N-1.
	\end{split}
\end{equation}
By the matrix form, the DGFRFT is
\begin{equation}
		\widehat{f}_{\alpha,d}=P^Hf,
\end{equation}
when $\alpha=1$, the DGFRFT degenerates into GFT for directed graph. 

The inverse DGFRFT is given by
\begin{equation}\label{IGFRFT}
	\begin{split}
		f(n)=\sum^{N-1}_{\ell=0}\widehat{f}_{\alpha,d}(\ell)p_{\ell}(n), n=0,1,\cdots,N-1.
	\end{split}
\end{equation}
\end{definition}

Now we have the definition of DGFRFT, it has some useful properties.
\begin{property} Unitarity: $P^{-1}=(U^\alpha)^{-1}=U^{-\alpha}=P^H$.
\end{property}
\begin{property} Index additivity: $U^{-\alpha}\circ U^{-\beta}=U^{-\beta}\circ U^{-\alpha}=U^{-(\alpha+\beta)} $.
\end{property}
\begin{property} Reduction to SGFRFT when $w_{ij}=w_{ji}$.
\end{property}
\begin{property} Parseval relation holds, for any signal $f$ and $g$ defined on the directed graph $G$ we have:
\begin{equation}
	\begin{split}
		\langle f,g\rangle=\langle \widehat{f}_{\alpha,d},\widehat{g}_{\alpha,d}\rangle.
	\end{split}
\end{equation}
If $f=g$, then
\begin{equation}\label{parseval}
	\begin{split}
		&\sum^{N}_{n=1}|f(n)|^{2}=\|f\|_{2}^{2}=\langle f,f\rangle\\
		&=\langle \widehat{f}_{\alpha,d},\widehat{f}_{\alpha,d}\rangle=\|\widehat{f}_{\alpha,d}\|_{2}^{2}=\sum^{N-1}_{\ell=0}|\widehat{f}_{\alpha,d}(\ell)|^{2}.
	\end{split}
\end{equation}
\end{property}

DGFRFT is a new transform focused on directed graph in graph fractional domain. Compared with GFRFT, it has several significant advantages.
First, the columns of U are linearly independent eigenvectors, and at the same time they are orthogonal. This results in DGFRFT that preserves the inner product when passed from the vertex domain to the graph fractional domain. In addition, DGFRFT preserves edge directionality. Finally, computing the GFRFT needs to do Jordan decomposition, when the size of the graph exceeds the median value, the calculation of Jordan decomposition will lead to serious and difficult numerical instability. As our new Laplacian matrix for directed graph is Hermitian matrix, the Jordan decomposition can be avoided in the calculation of DGFRFT.

\section{Directed graph filtering}
\label{filter}
\subsection{Spectral graph filtering}
In classical signal processing, filtering can be defined by convolution. The convolution of signal a with b is the result of signal a filtered by b. Therefore, to define the directed graph filtering in graph fractional domain, first we need to define the convolution operator. Convolution in the time domain is equivalent to multiplication in the Fourier domain. For directed graph, the graph fractional convolution operator is defined as the following form consistent with the classical convolution by using the directed graph fractional Laplacian eigenvector.
\begin{definition}(Convolution operator)
For any graph signal $f$ and $g$ which underlying graph structure is directed, their graph fractional convolution $*$ is
	\begin{equation}\label{convolution}
		(f*g)(n)=\sum_{l=0}^{N-1}	\widehat{f}_{\alpha,d}(\ell)	\widehat{g}_{\alpha,d}(\ell)p_{\ell}(n).
	\end{equation}
\end{definition}

A graph filter is a system which takes a graph signal as input and produces another graph signal as output \cite{chen2014signal}. Given a input directed graph signal $f_in$, the filtering is defined by the convolution of $f_in$ and a filter $h$. Thus the spectral directed graph filtering in vertex domain is:
\begin{equation}
		f^{out}(n)=(f*h)(n)=\sum^{N-1}_{\ell=0}\widehat{f}_{\alpha,d}^{in}(\ell)\widehat{h}_{\alpha,d}(\ell)p_{\ell}(n), n=0,1,\cdots,N-1,
\end{equation}
and in graph fractional domain:
\begin{equation}
		\widehat{f}_{\alpha,d}^{out}(\ell)=\widehat{f}_{\alpha,d}^{in}(\ell)\widehat{h}_{\alpha,d}(\ell), \ell=0,1,\cdots,N-1.
\end{equation}

\subsection{Frequency selective filtering}
The above spectral graph filtering is an ideal filter. It retains all frequencies within a given range and completely removes those that are out of range. Because the graph has a finite number of points, it is impossible to design such a filter. A frequency selective filter is a system that isolates the specific frequency components and excludes all others. It's easier to achieve in application. The ideal response of the frequency selective filter is \cite{iacobucci2005frequency}
\begin{equation}
	h_d(l)=\left\{\begin{array}{l}
		1,l\in\text{passband}\\
		0,l\in\text{stopband}.
	\end{array}\right.
\end{equation}
Lowpass, bandpass and highpass filters are three common frequency selective filters.

Using $h_d(l)$, the graph signal $f$ is filtered in graph fractional domain by:
\begin{equation}
		\widehat{f}_{\alpha,d}^{out}=J_d\widehat{f}_{\alpha,d},
\end{equation}
where $J_d=diag([h_d(\xi_1), h_d(\xi_2), \cdots, h_d(\xi_N)])$, and $P^{-1}f=\widehat{f}_{\alpha,d}$ is the DGFRFT of $f$.

Or, equivalently, vertex domain filtering can be obtained by inverse DGFRFT:
\begin{equation}
	f^{out}=P\widehat{f}_{\alpha,d}^{out}=PJ_d\widehat{f}_{\alpha,d}=PJ_dP^{-1}f=Hf.
\end{equation}
where $H=PJ_dP^{-1}$ represents the transfer matrix.

\section{Application}
\begin{figure}[t]
	\centering
	\includegraphics[width=0.7\textwidth]{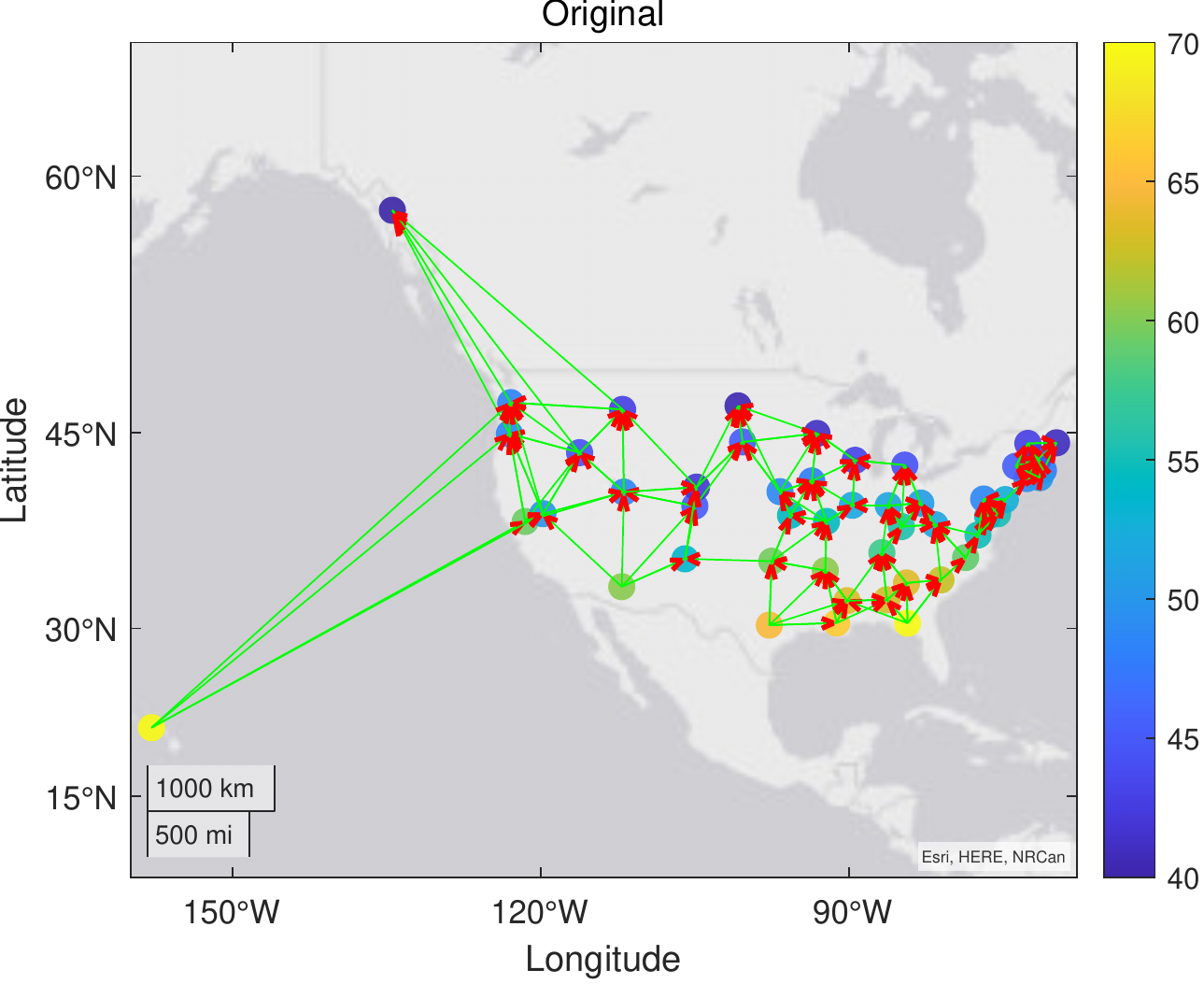}
	\caption{Average temperature data measured by 50 meteorological state stations in the US.}
	\label{USTEM}
\end{figure}

\begin{figure}[t]
	\centering
	\subfigure[original signal.]{
		\label{direct_original}
		\includegraphics[width=0.4\textwidth]{direct_original.pdf}}
	\subfigure[noisy signal.]{
		\label{direct_noisy}
		\includegraphics[width=0.4\textwidth]{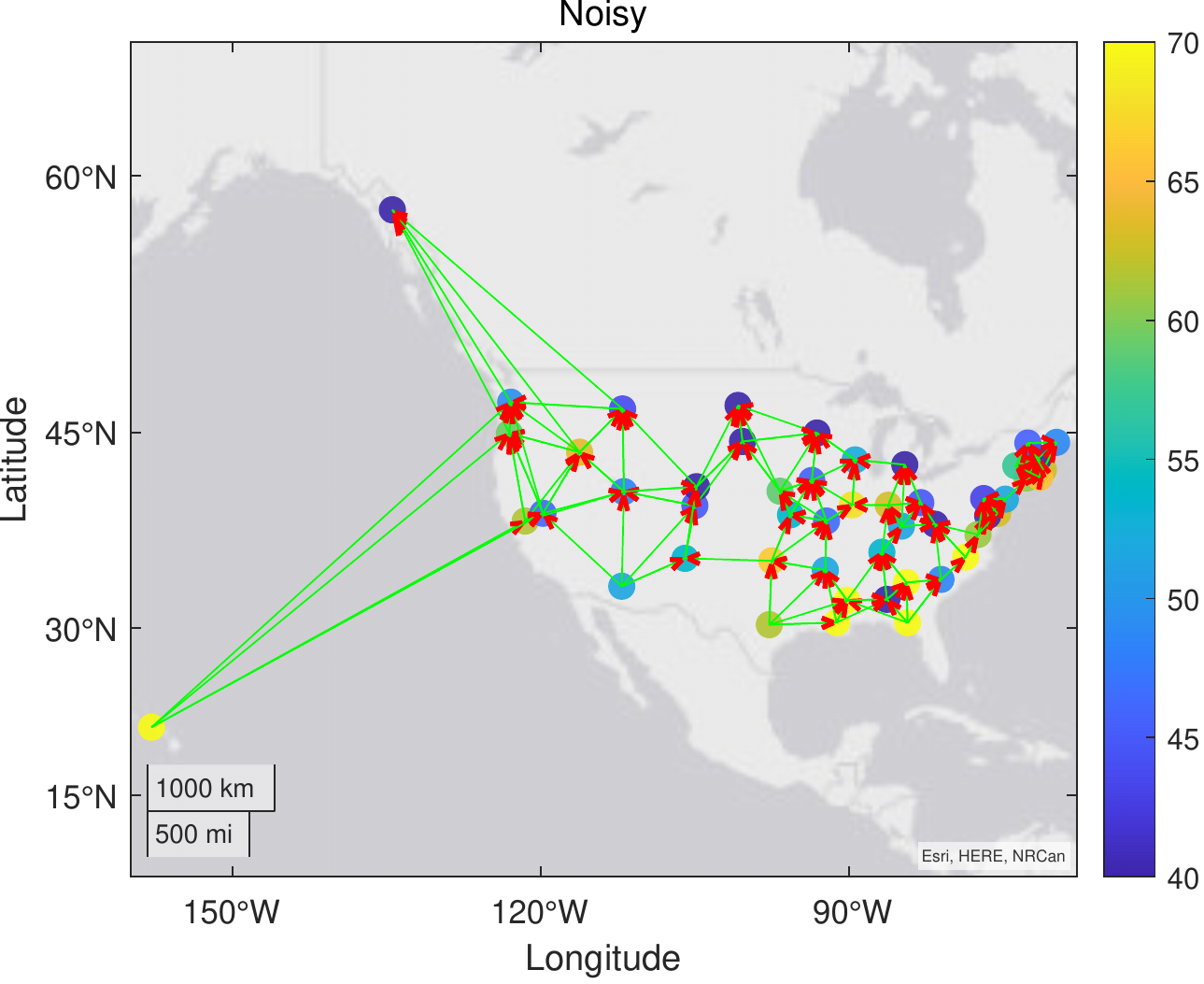}}
	\subfigure[denoised signal using Hermitian GFT \cite{furutani2019graph}.]{	  
		\label{direct_denoise_HL}
		\includegraphics[width=0.4\textwidth]{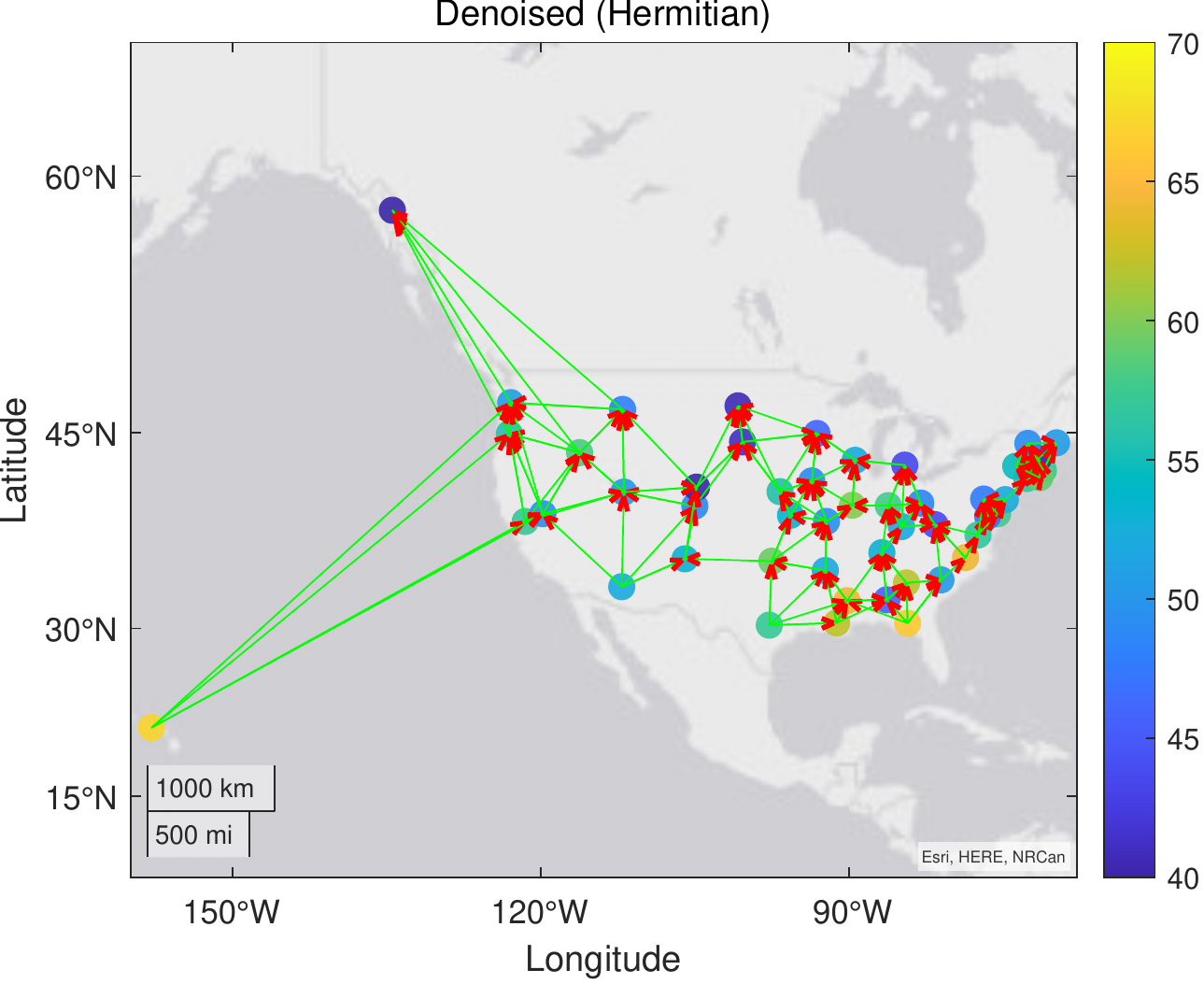}}
	\subfigure[denoised signal using DGFRFT.]{	  
		\label{direct_denoise_FrHL}
		\includegraphics[width=0.4\textwidth]{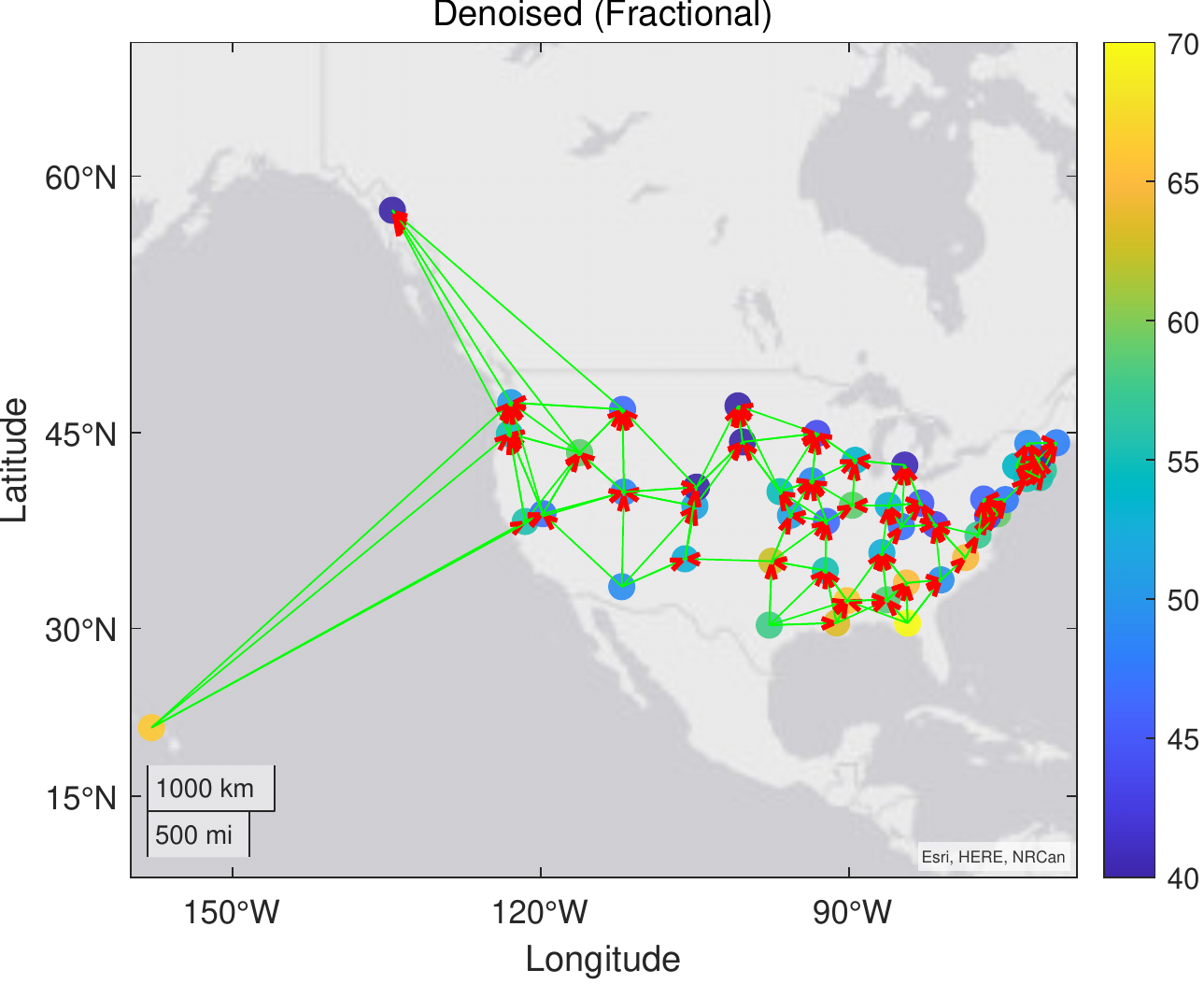}}
	\caption{Original, noisy and denoised temperature signals on directed US graph.}
	\label{directed}
\end{figure}

\begin{figure}[t]
	\centering
	\subfigure[original signal.]{
		\label{undirect_original}
		\includegraphics[width=0.3\textwidth]{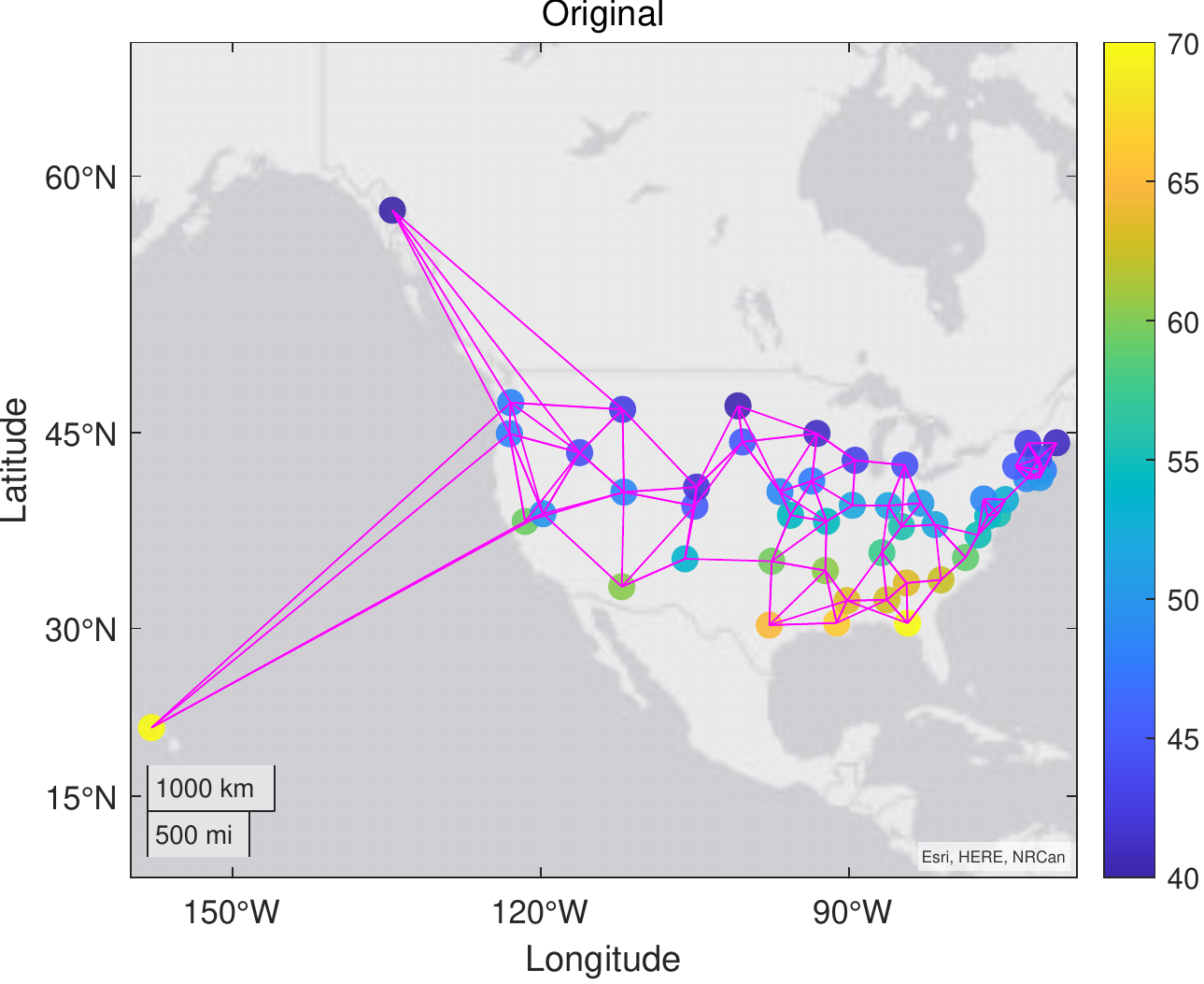}}
	\subfigure[noisy signal.]{
		\label{undirect_noisy}
		\includegraphics[width=0.3\textwidth]{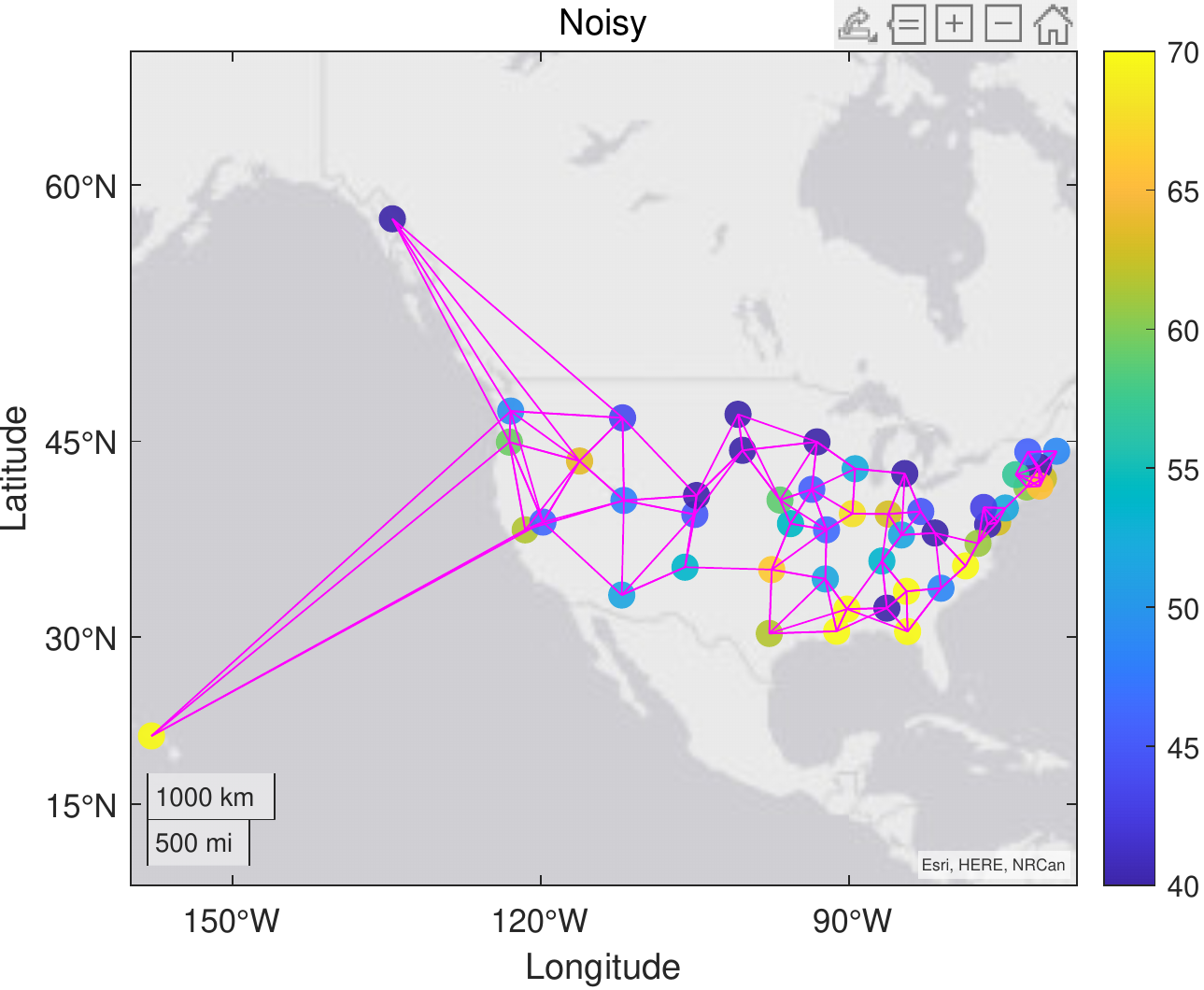}}
	\subfigure[denoised signal.]{	  
		\label{undirect_denoise_FrHL}
		\includegraphics[width=0.3\textwidth]{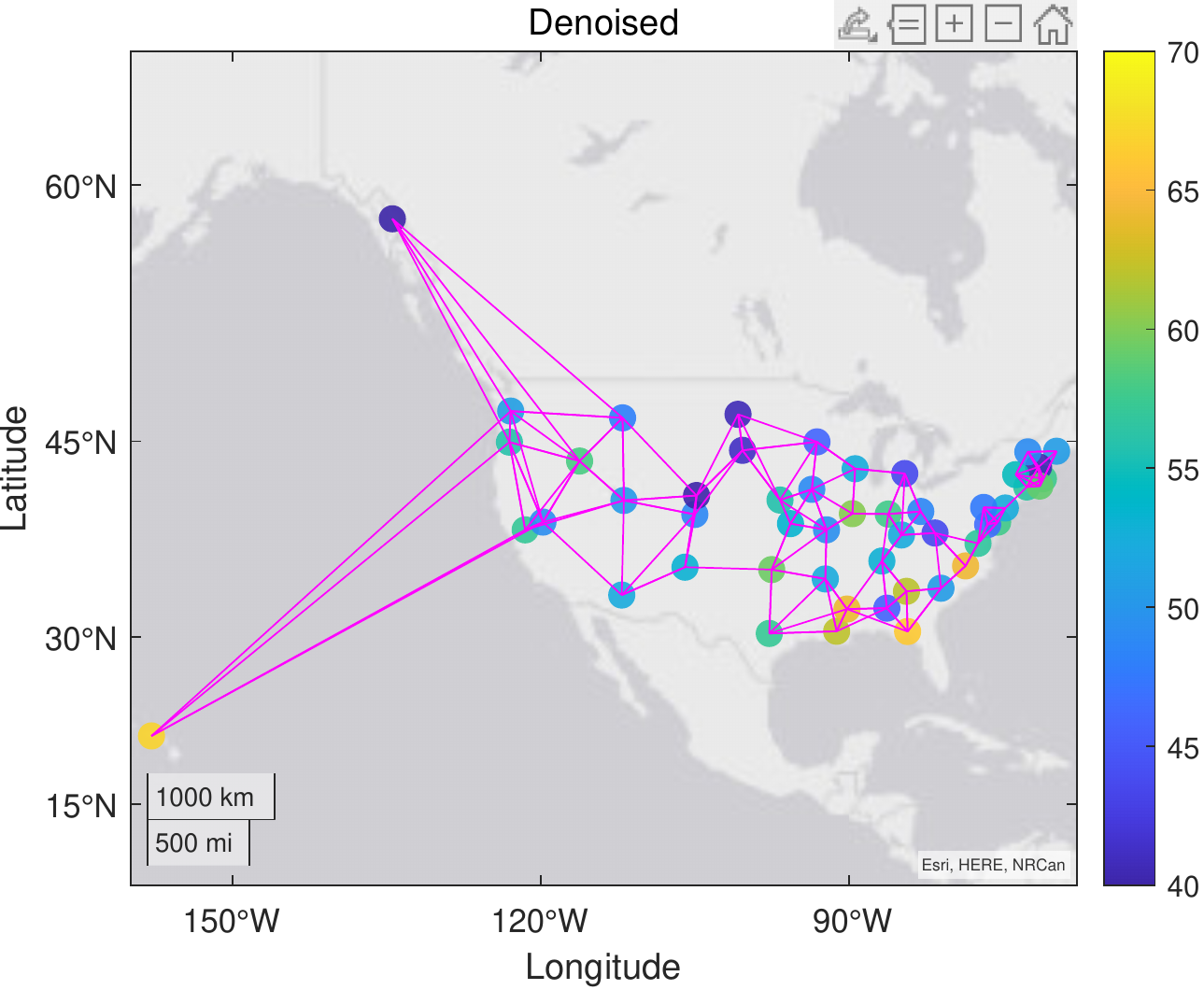}}
	\caption{Original, noisy and denoised temperature signals on undirected US graph.}
	\label{undirected}
\end{figure}

Here we assess the performance of DGFRFT via simulations on two graphs. In Simulation 1, we use the US temperature data to test that  directionality is an important key in application. For a graph that starts out as a directed graph, we apply DGFRFT to denoise. Then ignore the directionality of the graph, do the same denoising step, and observe the results. In graph fractional domain, DGFRFT and GFRFT \cite{25} are suitable for directed graph. The former is based on Laplacian matrix, and the latter uses adjacency matrix. In Simulation 2, for a real dataset, we use DGFRFT and GFRFT to perform the same filtering and denoising task, and then compare the results. The experimental results show that our DGFRFT introduces smaller errors and has higher robustness.

\subsection{Simulation 1: \textbf{Directed} VS \textbf{Undirected}}
In this section, real temperature data measured by 50 state meteorological stations in the United States (US) are used to make experiment. We consider signal denoising on a directed graph. First, based on a map of the US, we define a digraph using geographic locations and latitude. In this graph, the vertexes represent the 50 states of the US. The directed edges between states are based on latitudinal assignments from states with low latitude to states with high latitude. Only when two states share a border are they connected by an edge. The average annual temperature in each state is viewed as a graph signal. States at lower latitudes have higher average temperatures. Therefore, there is a correlation between latitude and temperature, and it is reasonable to use latitude to define a digraph. Fig. \ref{USTEM} shows the US temperature signal on a directed graph. The data is available in \sloppy\url{https://www.currentresults.com/Weather/US/average-annual-state-temperatures.php}.

We generate a noisy signal $g= f + n$, where $f$ is the original signals representing average temprature on the graph and $n$ is the noise vector whose coordinates are independently sampled from the Gaussian distribution with zero mean and standard deviation $\sigma = 10$. We apply GFT \cite{4}, Hemitian GFT \cite{furutani2019graph}, DGFRFT to recover the original signal from the noisy signal. For each method, we use a low-pass filter kernel
\begin{equation*}
	\hat{h_d}(\lambda) = \frac{1}{1+c\lambda},
\end{equation*}
where $\lambda$ is the eigenvalue corresponding to graph Laplacian, Hermitian graph Laplacian, graph Hermitian fractional Laplacian respectively. The denoised signal can be calculated as 
\begin{equation*}
	\tilde{f} = U\hat{H}U^*g,
\end{equation*}
where $\hat{H} = \text{diag}\{\hat{h_d}(\lambda_0),\ldots,\hat{h_d}(\lambda_{N-1})\}$ and $U$ is the GFT, Hermitian GFT, DGFRFT transform matrix. In this experiment, we set $c=0.02$, $q=0.5$ and $\alpha=0.9$. Note that we can only apply GFT \cite{4} to undirected graphs. To verify the importance of the directionality, we use the above denoising scheme using the same US graph in Fig. \ref{USTEM}, where we omit the direction of edges. Fig. \ref{directed} and \ref{undirected} give an intuitive example of the original, noisy, and denoised temperature signals on directed and undirected US graphs respectively. In the directed case, the performance difference of Hermitian GFT and DGFRFT is observed in Fig. \ref{direct_denoise_HL} and \ref{direct_denoise_FrHL}. In the undirected case, the denoising result after GFT is shown in Fig. \ref{undirect_denoise_FrHL}.

In addition, we calculate the root mean square error (RMSE) between original signal and the denoised signal obtained using GFT \cite{4}, Hermitian GFT \cite{furutani2019graph} and DGFRFT approaches. The results are described in Table \ref{tab:RE}. As the results demonstrate, these three transforms to graph signal denoising perform well, leading to small average errors. The RMSE obtained by our method is about 6.3828, while this quantity changes to 6.5685 and 6.5657 by using classical GFT based on undirected graph \cite{4} and another kind of GFT based on directed graph \cite{furutani2019graph} for denoising respectively. In conclusion, the proposed approach DGFRFT on directed graph outperforms other two ways, which highlights its practical usefulness in data denoising.

\begin{table*}[htbp]
	\centering
	\caption{Average RMSE between denoised signal and original signal.}
	\begin{tabular}{|c|c|c|}
		\hline
		Graph type & Transform method & RMSE\\
		\hline
		undirected & GFT \cite{4} & 6.5685\\
		directed & Hermitian GFT \cite{furutani2019graph} & 6.5657\\
		directed & DGFRFT & 6.3828\\
		\hline
	\end{tabular}
	\label{tab:RE}
\end{table*}

\subsection{Simulation 2: \textbf{DGFRFT} VS \textbf{GFRFT}}
Next, compared to GFRFT \cite{25}, which is also applicable to digraph, we study a real brain graph to demonstrate the superiority of DGFRFT in denoising tasks. The datasets represent the macaque large-scale visual and sensorimotor area corticocortical connectivity \cite{rubinov2010complex}. It has 47 vertexes and 505 edges (121 edges are directed). The vertexes represent cortical areas and edges represent large corticocortical tracts or functional associations. The data is available in \sloppy\url{https://sites.google.com/site/bctnet/}.

\begin{figure}[t]
	\centering
	\includegraphics[width=0.7\textwidth]{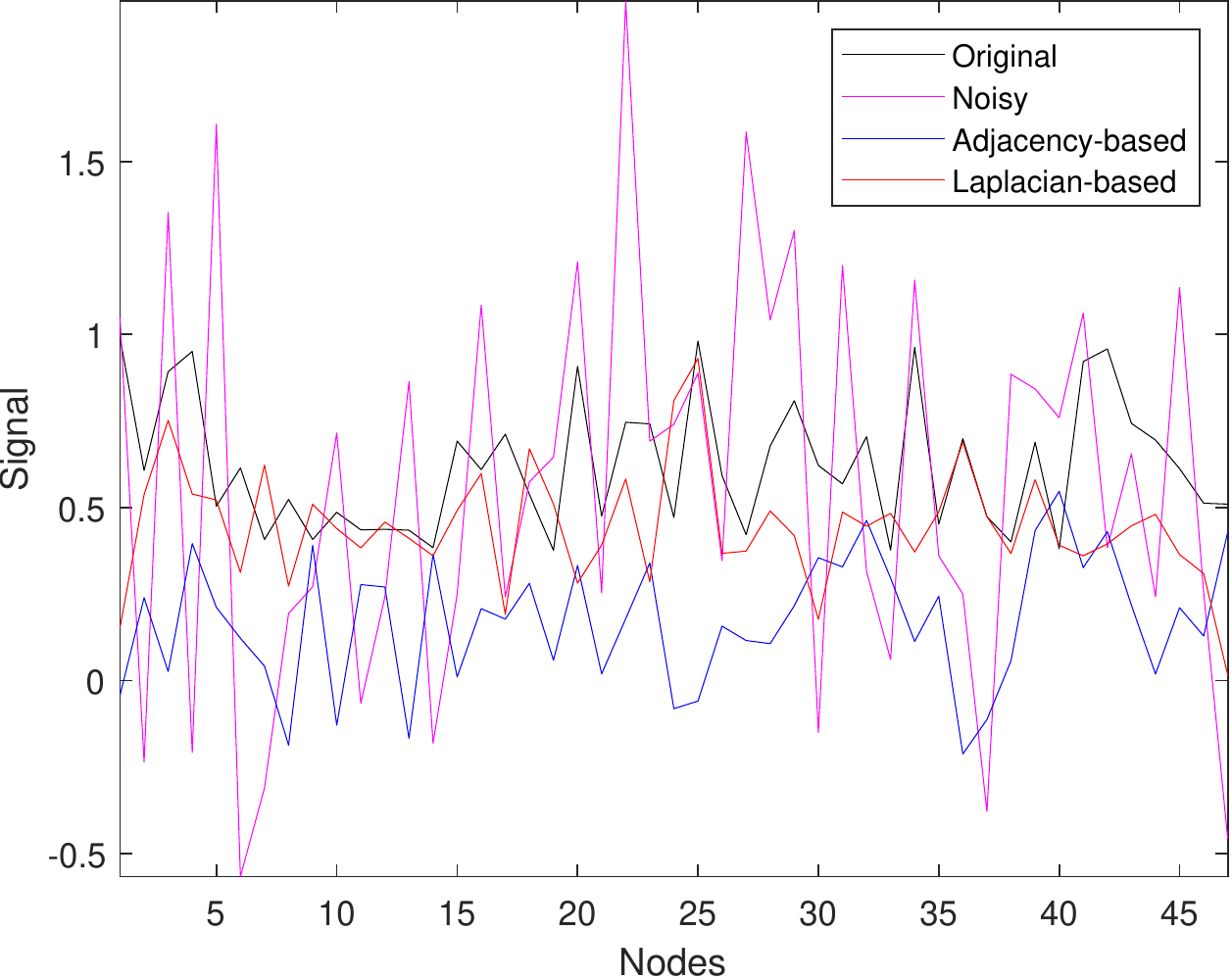}
	\caption{A sample realization of the original, noisy, and recovered signals of DGFRFT and GFRFT}
	\label{compared}
\end{figure}

\begin{figure}[t]
	\centering
	\subfigure[Relative recovery error of DGFRFT]{
		\label{laperror}
		\includegraphics[width=0.45\textwidth]{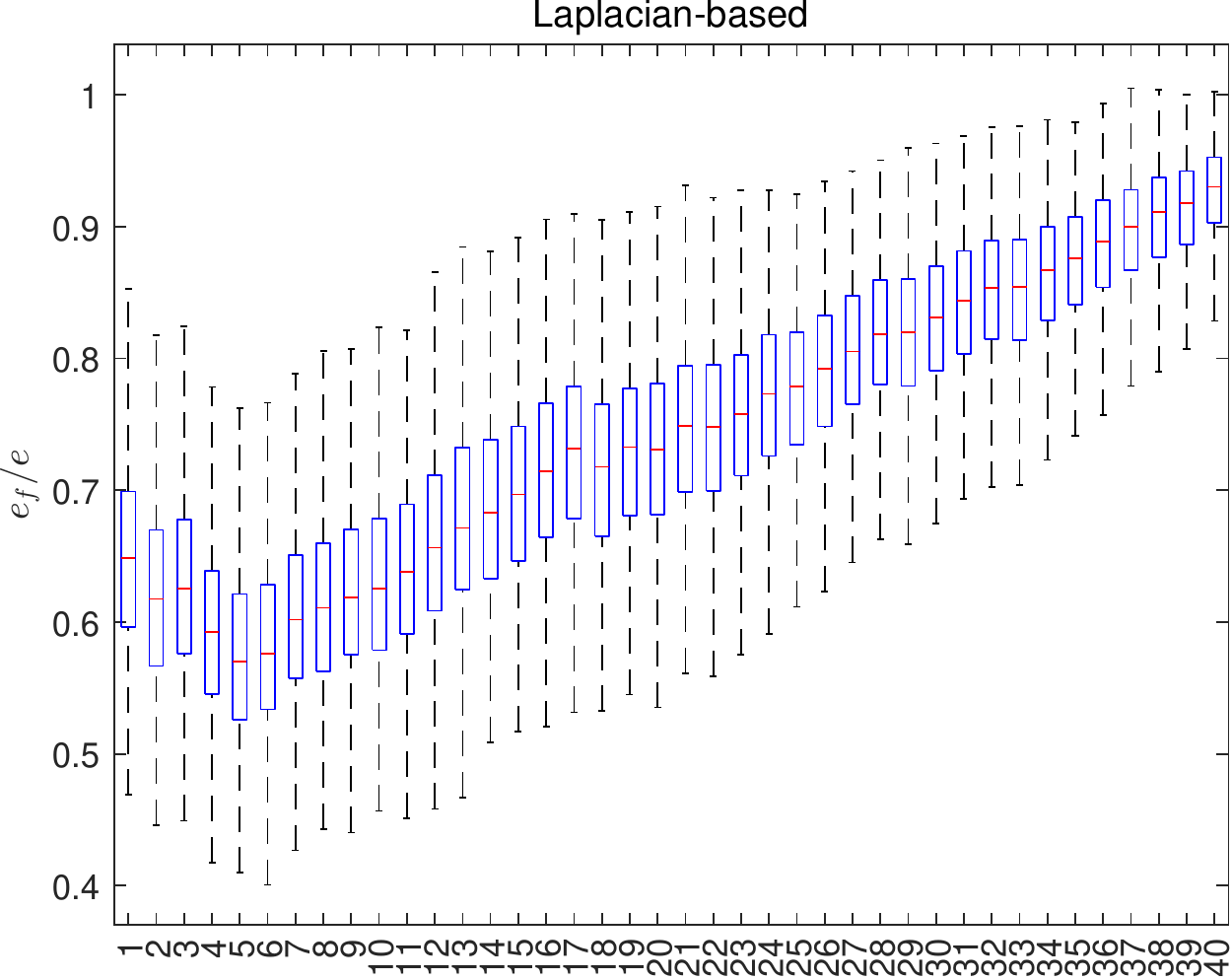}}
	\subfigure[Relative recovery error of GFRFT]{
		\label{adjerror}
		\includegraphics[width=0.45\textwidth]{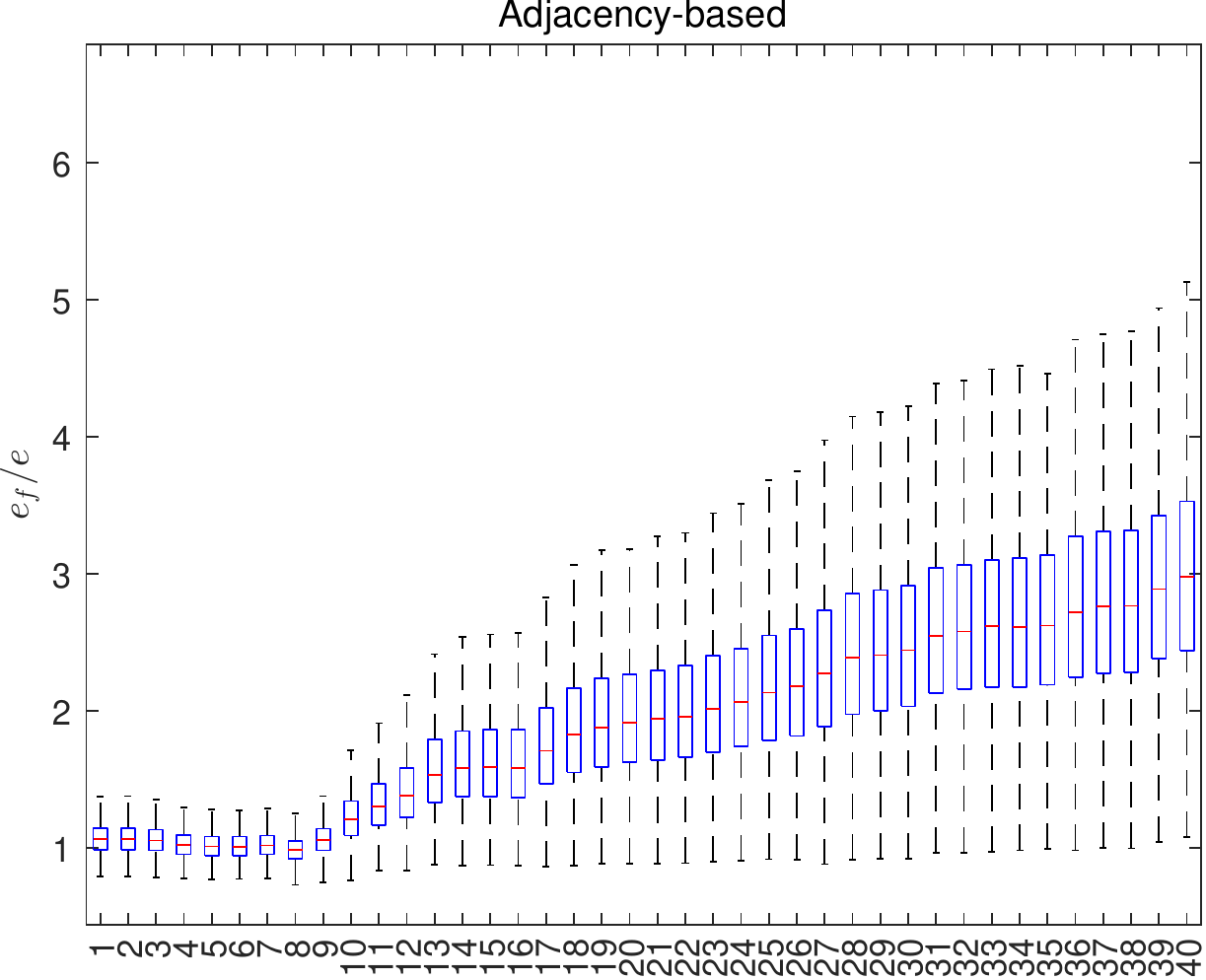}}
	\caption{Relative recovery error 
		of DGFRFT and GFRFT when $\alpha=0.9$ and $q=0.2$.}
	\label{error}
\end{figure}

Let $U$ be the orthonormal DGFRFT or GFRFT basis. We construct a synthetic graph signal by $x_f=e^{-f}$. We add Gaussian noise to the original signal to obtain noisy signal $g=x_f+n$. We use a series of filter kernels constructed by $H := \text{diag}(h)$, where $h_i = 1[i \le \ell]$ and $\ell$ is a tuning parameter to control the number of diagonal elements. The parameter $\ell$ can be viewed as spectral window size and when $\ell=47$ the filter kernel is just the identity matrix. The recovered signal is given by 
\begin{equation*}
	\tilde{x}_f = UHU^*g.
\end{equation*}
Fig. \ref{compared} shows an example of original, noisy, and recovered signals of DGFRFT and GFRFT with window size $5$. Moreover, we compute the relative recovery error with respect to the true error. The recovery error is defined as $e_f = \|\tilde{x}_f - x_f\|/\|x_f\|$. The true error is defined as $e = \|n\|/\|x\|$, and the relative recovery error is defined as $e_f/e$. Fig. \ref{error} are boxplots depicting $e_f/e$ versus $\ell$ averaged over 1000 Monte-Carlo simulations, and demonstrates the effectiveness of adopting filters along with the proposed two methods. We can see DGFRFT is much more stable compared with GFRFT. The Jordan decomposition required in GFRFT is numerically unstable, which may be responsible for the large reconstruction errors. Therefore, compared with GFRFT, DGFRFT has certain advantages when dealing with graph signals whose underlying structure is directed graph.

\section{Conclusions}
Signals defined on directed graphs have important practical significance. This paper proposes a methodology to carry out graph signals processing on directed graph in spectral graph fractional domain. First, we introduce a method to construct a new fractional Laplacian matrix for directed graph and prove that it is a positive semi-definite Hermitian matrix. Equipped with this Hermitian  fractional Laplacian matrix, a new transform named DGFRFT is presented. Then, to highlight the utility of DGFRFT, we propose two basic filtering method and use it to signal denoising on real data. Finally, the effectiveness of the DGFRFT construction is illustrated through tests on directed real world graphs. 

\bibliography{references}

\end{document}